\begin{document}
\title[ On Certain Hypergeometric Identities ]
{On Certain Hypergeometric Identities Deducible by using Beta Integral Method}

\author[A. K. Ibrahim, M. A. Rakha, A. K. Rathie]
{Adel K. Ibrahim, Medhat A. Rakha, Arjun K. Rathie}  

\address{Adel K. Ibrahim \newline
 Mathematics Department, 
 College of Science,
 Jazan University University, Jazan, Saudi Arabia}  
 \email{dradlkhalil@yahoo.com}

\address{Medhat A. Rakha \newline
 Department of Mathematics and Statistics,
 College of Science,
 Sultan Qaboos University,
 P.O.Box 36 - Al-Khoud 123, 
 Muscat - Sultanate of Oman}
\email{medhat@squ.edu.om}

\address{Arjun K. Rathie \newline
 Department of Mathematics, 
 School of Mathematical and Physical Sciences,
 Central University of Kerala, Riverside Transit Campus,
 Padennakkad P.O. Nileshwar, Kasaragod - 671 328, Kerala - INDIA}
 \email{akrathie@cukerala.edu.in}

\subjclass[2000]{33C05, 33C20, 33C70}
\keywords{Hypergeometric Series; Kummer Summation Theorem, Beta Integral}

\begin{abstract}
The aim of this research paper is to demonstrate how one can obtain eleven new
and interesting hypergeometric identities (in the form of a single result)
from the old ones by mainly applying the well known beta integral method which
was used successfully and systematically by Krattenthaler and Rao in their
well known, very interesting research papers.

The results are derived with the help of generalization of a quadratic
transformation formula due to Kummer very recently obtained by Kim, et al. .
Several identities including one obtained earlier by Krattenthaler and Rao
follow special cases of our main findings.

The results established in this paper are simple, interesting, easily
established and may be potentially useful.

\end{abstract}

\maketitle
\numberwithin{equation}{section}
\newtheorem{theorem}{Theorem}[section]
\newtheorem{lemma}[theorem]{Lemma}
\newtheorem{proposition}[theorem]{Proposition}
\newtheorem{corollary}[theorem]{Corollary}
\newtheorem*{remark}{Remark}

\section{Introduction and Preliminaries}

The generalized hypergeometric series $_{p}F_{q}$ is defined by \cite{2,7}
\begin{equation}
_{p}F_{q}\left[
\begin{array}
[l]{llll}
\alpha_{1}, & \ldots, & \alpha_{p}; & \\
&  &  & z\\
\beta_{1}, & \ldots, & \beta_{q}; &
\end{array}
\right]  =
{\displaystyle\sum\limits_{n=0}^{\infty}}
\frac{\left(  \alpha_{1}\right)  _{n}\ldots\left(  \alpha_{p}\right)  _{n}%
}{\left(  \beta_{1}\right)  _{n}\ldots\left(  \beta_{q}\right)  _{n}}%
\frac{z^{n}}{n!} \label{1.1}%
\end{equation}
where $\left(  a\right)  _{n}$ is the Pochhammer symbol (or the shifted or
raised factorial, since $\left(  1\right)  _{n}=n!$) defined (for
$a\in\mathbb{C}$) by%
\begin{align}
\left(  a\right)  _{n}  &  =\left\{
\begin{array}
[l]{l}%
1\\
a(a+1)\ldots(a+n-1)
\end{array}
\right.
\begin{array}
[c]{c}
n=0\\
n\in\mathbb{N}=\{1,2,\ldots\}
\end{array}
\nonumber\\
&  =\frac{\Gamma\left(  a+n\right)  }{\Gamma\left(  a\right)  },\,a\in
\mathbb{C}\smallsetminus\mathbb{Z}_{0}^{-} \label{1.2}%
\end{align}
and $\mathbb{Z}_{0}^{-}$ denotes the set of nonpositive integers, $\mathbb{C}
$ the set of complex numbers, and $\Gamma\left(  a\right)  $ is the familiar
Gamma function. Here $p$ and $q$ are positive integers or zero (interpreting
an empty product as unity), and we assume for simplicity that the variable $z
$, the numerator parameters $\alpha_{1},\ldots,\alpha_{p}$ and the denominator
parameters $\beta_{1},\ldots,\beta_{q}$ take on complex values, provided that
no zeros appear in the denominator of (\ref{1.1}), that is
\begin{equation}
\beta_{j}\in\mathbb{C}\smallsetminus\mathbb{Z}_{0}^{-};\qquad j=1,\ldots,q.
\label{1.3}%
\end{equation}

For the detailed conditions of the convergence of the series (\ref{1.1}), we
refer to \cite{7}. It is not out of place to mention here that if one of the
numerator parameters, say $a_{j}$ is a negative integer, then the series
(\ref{1.1}) reduces to a polynomial in $z$ of degree $-a_{j}$.

It is interesting to mention here that whenever a generalized hypergeometric
functions reduces to gamma function, the results are very important from the application point of view.
Thus the classical summation theorems such as those of Gauss, Gauss second,
Kummer and Bailey for the series $_{2}F_{1}$; Watson, Dixon, Whipple and
Saalsch\"{u}tz for the series $_{3}F_{2}$ and others play an important role in the
theory of hypergeometric and generalized hypergeometric series.

In a very interesting, popular and useful research paper, Bailey \cite{1}, by
employing the above mentioned classical summation theorems, obtained a large
number of results involving products of generalized hypergeometric series as
well as quadratic and cubic transformations. Several other results were also
given by Gauss and Kummer.

Evidently, if the product of two generalized hypergeometric series can be
expressed as another generalized hypergeometric series with argument $x$, the
coefficients of $x^{n}$ in the product must be expressible in terms of gamma functions.

In our present investigation, we are interested in the following quadratic
transformation due to Kummer \cite{6}%
\begin{equation}
(1-x)^{-2a}\,_{2}F_{1}\left[
\begin{array}
[l]{lll}
2a, & b; & \\
&  & -\frac{2x}{1-x}\\
2b; &  &
\end{array}
\right]  =\,_{2}F_{1}\left[
\begin{array}
[l]{lll}
a, & a+\frac{1}{2}; & \\
&  & x^{2}\\
b+\frac{1}{2}; &  &
\end{array}
\right]  . \label{1.4}%
\end{equation}

This result was independently rediscovered by Ramanujan \cite[Entry 2, p.49]{3}

By assuming $a$ to be a nonpositive integer and employing the so-called Beta
integral method, recently Krattenthaler and Rao \cite[Eq. (3.4), p. 164]{5}
obtained the following interesting identity,%
\begin{align}
&  _{4}F_{3}\left[
\begin{array}
[l]{lllll}
a, & a+\frac{1}{2}, & \frac{1}{2}d, & \frac{1}{2}d+\frac{1}{2}; & \\
&  &  &  & 1\\
b+\frac{1}{2}, & \frac{1}{2}e, & \frac{1}{2}e+\frac{1}{2}; &  &
\end{array}
\right] \nonumber\\
&  =\frac{\Gamma\left(  e\right)  \Gamma\left(  e-2a-d\right)  }{\Gamma\left(
e-2a\right)  \Gamma\left(  e-d\right)  }\,_{3}F_{2}\left[
\begin{array}
[l]{llll}
2a, & b, & d; & \\
&  &  & 2\\
2b, & 1+2a+d-e; &  &
\end{array}
\right]  \label{1.5}%
\end{align}
provided $a$ or $d$ is a nonpositive integer.

Very recently, Kim, et al. \cite{4} have obtained the following generalization
of the Kummer quadratic transformation formula (\ref{1.4}) in the form
\begin{align}
&  (1-x)^{-2a}\,_{2}F_{1}\left[
\begin{array}
[l]{lll}
2a, & b; & \\
&  & -\frac{2x}{1-x}\\
2b+j; &  &
\end{array}
\right] \nonumber\\
&  =\frac{\Gamma\left(  b\right)  \Gamma\left(  1-b\right)  }{\Gamma\left(
b+\frac{1}{2}j+\frac{1}{2}\left\vert j\right\vert \right)  \Gamma\left(
1-b-\left[  \frac{j+1}{2}\right]  \right)  }\nonumber\\
&  \cdot{\displaystyle\sum\limits_{n=0}^{\infty}}
A_{j}\frac{x^{2n}}{n!}\frac{\left(  a\right)  _{n}\left(  a+\frac{1}%
{2}\right)  _{n}\left(  b+\left[  \frac{j+1}{2}\right]  \right)  _{n}}{\left(
b+\frac{1}{2}j\right)  _{n}\left(  b+\frac{1}{2}j+\frac{1}{2}\right)  _{n}%
}\nonumber\\
&  +\frac{2a}{(2b+j)}\frac{\Gamma\left(  -b\right)  \Gamma\left(  1+b\right)
}{\Gamma\left(  -b-\left[  \frac{j}{2}\right]  \right)  \Gamma\left(
b+\frac{1}{2}j+\frac{1}{2}\left\vert j\right\vert \right)  }\nonumber\\
&  \cdot{\displaystyle\sum\limits_{n=0}^{\infty}}
B_{j}\frac{x^{2n+1}}{n!}\frac{\left(  a+\frac{1}{2}\right)  _{n}\left(
a+1\right)  _{n}\left(  b+1+\left[  \frac{j}{2}\right]  \right)  _{n}}{\left(
b+\frac{1}{2}j+\frac{1}{2}\right)  _{n}\left(  b+\frac{1}{2}j+1\right)  _{n}}
\label{1.6}
\end{align}
for $j=0,\pm1,\pm2,\pm3,\pm4,\pm5.$

Here, as usual, $[x]$ denotes the greatest integer less than or equal to $x$
and its modulus is denoted by $\left\vert x\right\vert$. The coefficients,
$A_{j}$ and $B_{j}$ are given in the following table.

\bigskip
\begin{center}
\begin{tabular}
[c]{|l|l|l|}\hline
$\mathbf{j}$ & $\mathbf{A}_{j}$ & $\mathbf{B}_{j}$\\\hline
$5$ & $%
\begin{array}
[l]{l}
-4(1-b-2n)^{2}\\
+2(1-b)(1-b-2n)+(1-b)^{2}\\
+22(1-b-2n)+13b-33
\end{array}
$ & $
\begin{array}
[l]{l}
4(b+2n)^{2}-2(1-b)(b+2n)\\
-(1-b)^{2}+34(b+2n)+b+61
\end{array}
$\\\hline
$4$ & $%
\begin{array}
[l]{l}
2(b+1+2n)(b+3+2n)\\
-b(b+3)
\end{array}
$ & $4(b+3+2n)$\\\hline
$3$ & $b+2+4n$ & $-(3b+6+4n)$\\\hline
$2$ & $-(b+1+2n)$ & $-2$\\\hline
$1$ & $-1$ & $1$\\\hline
$0$ & $1$ & $0$\\\hline
$-1$ & $1$ & $1$\\\hline
$-2$ & $1-b-2n$ & $2$\\\hline
$-3$ & $1-b-4n$ & $3-3b-4n$\\\hline
$-4$ & $%
\begin{array}
[l]{l}
2(1-b-2n)(3-b-2n)\\
-(1-b)(4-b)
\end{array}
$ & $4(1-b-2n)$\\\hline
$-5$ & $%
\begin{array}
[l]{l}
4(1-b-2n)^{2}\\
-2(1-b)(1-b-2n)-(1-b)^{2}\\
+8(1-b-2n)+7b-7
\end{array}
$ & $
\begin{array}
[l]{l}
4(b+2n)^{2}-2(1-b)(b+2n)\\
-(1-b)^{2}-16(b+2n)+b-1
\end{array}
$\\\hline
\end{tabular}
\end{center}
\bigskip

Here, in this paper, we show how one can easily obtain eleven interesting
hypergeometric identities including the Krattenthaler-Rao result (\ref{1.5})
in the form of a single unified result by employing the beta integral method
developed by Krattenthaler and Rao \cite{5}. The results are derived with the
help of the generalization (\ref{1.6}) of the Kummer's formula (\ref{1.4}).
Several interesting special cases of our main result including (\ref{1.5}) are
also explicitly demonstarted.

The results presented in this paper are simple, interesting, easily
established and (potentially) useful.

\section{Main Result}

Our eleven main identities are given here in the form of a single unified
result asserted in the following theorem.

\begin{theorem}
For $a$ or $d$ to be a nonpositive integer, the following generalization of
Krattenthaler-Rao formula (\ref{1.5}) holds true%
\begin{align}
&  \frac{\Gamma\left(  e\right)  \Gamma\left(  e-2a-d\right)  }{\Gamma\left(
e-2a\right)  \Gamma\left(  e-d\right)  }\,_{3}F_{2}\left[
\begin{array}
[l]{llll}
2a, & b, & d; & \\
&  &  & 1\\
2b+j, & 1+2a+d-e; &  &
\end{array}
\right] \nonumber\\
&  =\frac{\Gamma\left(  b\right)  \Gamma\left(  1-b\right)  }{\Gamma\left(
b+\frac{1}{2}j+\frac{1}{2}\left\vert j\right\vert \right)  \Gamma\left(
1-b-\left[  \frac{j+1}{2}\right]  \right)  }\nonumber\\
&  \cdot{\displaystyle\sum\limits_{n=0}^{\infty}}
\frac{A_{j}}{n!}\frac{\left(  a\right)  _{n}\left(  a+\frac{1}{2}\right)
_{n}\left(  b+\left[  \frac{j+1}{2}\right]  \right)  _{n}}{\left(  b+\frac
{1}{2}j\right)  _{n}\left(  b+\frac{1}{2}j+\frac{1}{2}\right)  _{n}}%
\frac{\left(  \frac{1}{2}d\right)  _{n}\left(  \frac{1}{2}d+\frac{1}%
{2}\right)  _{n}}{\left(  \frac{1}{2}e\right)  _{n}\left(  \frac{1}{2}%
e+\frac{1}{2}\right)  _{n}}\nonumber\\
&  +\frac{2a}{(2b+j)}\left(\frac{d}{e}\right)\frac{\Gamma\left(  -b\right)  \Gamma\left(  1+b\right)
}{\Gamma\left(  b+\frac{1}{2}j+\frac{1}{2}\left\vert j\right\vert \right)
\Gamma\left(  -b-\left[  \frac{j}{2}\right]  \right)} \nonumber\\
&  \cdot{\displaystyle\sum\limits_{n=0}^{\infty}}
\frac{B_{j}}{n!}\frac{\left(  a+\frac{1}{2}\right)  _{n}\left(  a+1\right)
_{n}\left(  b+1+\left[  \frac{j}{2}\right]  \right)  _{n}}{\left(  b+\frac
{1}{2}j+\frac{1}{2}\right)  _{n}\left(  b+\frac{1}{2}j+1\right)  _{n}}%
\frac{\left(  \frac{1}{2}d+\frac{1}{2}\right)  _{n}\left(  \frac{1}%
{2}d+1\right)  _{n}}{\left(  \frac{1}{2}e+\frac{1}{2}\right)  _{n}\left(
\frac{1}{2}e+1\right)  _{n}} \label{2.1}%
\end{align}
for $j=0,\pm1,\pm2,\pm3,\pm4,\pm5.$ The coefficients $A_{j}$ and $B_{j}$ are same as given in the table.
\end{theorem}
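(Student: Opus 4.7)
The plan is to apply the beta integral method of Krattenthaler and Rao \cite{5} to the Kim et al.\ generalization (\ref{1.6}) of Kummer's quadratic transformation. I multiply both sides of (\ref{1.6}) by the kernel $x^{d-1}(1-x)^{e-d-1}$ and integrate $x$ over $[0,1]$; identity (\ref{2.1}) will emerge from equating the two resulting evaluations.

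For the left-hand side of (\ref{1.6}) I expand $(1-x)^{-2a}\,{}_2F_1\bigl[2a,b;2b+j;-2x/(1-x)\bigr]$ as the binomial-weighted series $\sum_{n\ge 0}\frac{(2a)_n(b)_n}{(2b+j)_n\,n!}(-2)^n\,x^n(1-x)^{-2a-n}$, so that termwise integration against the kernel reduces to the Euler beta integrals $\int_0^1 x^{d+n-1}(1-x)^{e-d-2a-n-1}\,dx=\Gamma(d+n)\Gamma(e-d-2a-n)/\Gamma(e-2a)$. The reflection rule $\Gamma(e-d-2a-n) = (-1)^n \Gamma(e-d-2a)/(1+2a+d-e)_n$ then folds everything into the prefactor $\Gamma(d)\Gamma(e-d-2a)/\Gamma(e-2a)$ multiplied by a single $_3F_2$ in the parameters $2a,b,d;\,2b+j,\,1+2a+d-e$, which, after I pull out the common factor $\Gamma(d)\Gamma(e-d)/\Gamma(e)$ also generated on the right, reproduces the prefactored $_3F_2$ displayed on the left-hand side of (\ref{2.1}).

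For the right-hand side of (\ref{1.6}), the two series contain only even and only odd powers of $x$, so the termwise integrals are $\int_0^1 x^{2n+d-1}(1-x)^{e-d-1}\,dx = \Gamma(d)\Gamma(e-d)(d)_{2n}/[\Gamma(e)(e)_{2n}]$ and $\int_0^1 x^{2n+d}(1-x)^{e-d-1}\,dx = \Gamma(d)\Gamma(e-d)(d)_{2n+1}/[\Gamma(e)(e)_{2n+1}]$ respectively. I then invoke the duplication identity $(c)_{2n} = 2^{2n}(\tfrac{1}{2}c)_n(\tfrac{1}{2}c+\tfrac{1}{2})_n$, whereby the first quotient becomes $(\tfrac{1}{2}d)_n(\tfrac{1}{2}d+\tfrac{1}{2})_n/[(\tfrac{1}{2}e)_n(\tfrac{1}{2}e+\tfrac{1}{2})_n]$; for the second quotient I first split $(d)_{2n+1}=d\,(d+1)_{2n}$ and $(e)_{2n+1}=e\,(e+1)_{2n}$, producing a scalar $d/e$ together with $(\tfrac{1}{2}d+\tfrac{1}{2})_n(\tfrac{1}{2}d+1)_n/[(\tfrac{1}{2}e+\tfrac{1}{2})_n(\tfrac{1}{2}e+1)_n]$. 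These are precisely the Pochhammer quotients appearing in the first and second sums on the right-hand side of (\ref{2.1}), and the extra scalar $d/e$ is absorbed into the coefficient $2a\,d/[(2b+j)\,e]$ of the second sum.

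Equating the two evaluations and cancelling the common factor $\Gamma(d)\Gamma(e-d)/\Gamma(e)$ yields (\ref{2.1}). The only non-routine step is justifying the interchange of summation and integration: this is secured by the standing hypothesis that $a$ or $d$ is a nonpositive integer, since the corresponding series terminates and the interchange reduces to a finite sum swap, while the standard convergence conditions $\Re(d)>0$ and $\Re(e-d-2a)>0$ for the beta integrals may be assumed at the outset and subsequently relaxed by analytic continuation in $d$ and $e$. The eleven explicit identities then follow by substituting the admissible values $j\in\{0,\pm 1,\pm 2,\pm 3,\pm 4,\pm 5\}$ together with the corresponding table entries for $A_j$ and $B_j$, the case $j=0$ recovering the Krattenthaler--Rao identity (\ref{1.5}).
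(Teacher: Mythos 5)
Your proposal is correct and follows essentially the same route as the paper's own proof: multiplying (\ref{1.6}) by the kernel $x^{d-1}(1-x)^{e-d-1}$, integrating term by term, reducing the left side via the reflection identity to the $_3F_2$ with argument $2$ (note the argument printed as $1$ in the theorem statement is a typo, as the paper's proof and all the special cases confirm), and reducing the right side via the duplication formula to produce the $d/e$ factor. The only cosmetic difference is that you invoke the Pochhammer form $(c)_{2n}=2^{2n}(\tfrac{1}{2}c)_n(\tfrac{1}{2}c+\tfrac{1}{2})_n$ where the paper cites Legendre's duplication formula for the Gamma function, which is the same fact.
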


\begin{proof}
Let us first assume that $a$ be a non-positive integer. Multiply the left-hand
side of (\ref{1.6}) by $x^{d-1}(1-x)^{e-d-1}$ and integrating the resulting
equation with respect to $x$ from $0$ to $1$, expressing the involved
$_{2}F_{1}$ as series and changing the order of integration and summation,
which is easily seen to be justified due to the uniform convergence of the
involved series, we have%
\[
L.H.S.={\displaystyle\sum\limits_{n=0}^{\infty}}
\frac{\left(2a\right)_{n}\left(b\right)_{n}\left(-2\right)^{n}}{\left(2b+j\right)_{n}n!}
{\displaystyle\int\limits_{0}^{1}} x^{d+n-1}(1-x)^{e-d-2a-n-1}dx.
\]
Evaluating the beta-integral and using the identity
\[
\Gamma\left(\alpha-n\right)=\frac{(-1)^{n}\Gamma\left(n\right)}{\left(1-\alpha\right)_{n}}
\]
we have, after some algebra
\[
L.H.S.=\frac{\Gamma\left(d\right)\Gamma\left(e-d-2a\right)}{\Gamma\left(e-2a\right)}{\displaystyle\sum\limits_{n=0}^{\infty}}
\frac{\left(  2a\right)  _{n}\left(  b\right)  _{n}\left(  d\right)  _{n}
}{\left(  2b+j\right)  _{n}\left(  1+2a+d-e\right)  _{n}}\frac{2^{n}}{n!}
\]
summing up the series, we have
\begin{equation}
L.H.S.=\frac{\Gamma\left(  d\right)  \Gamma\left(  e-d-2a\right)  }{\Gamma\left(  e-2a\right)  }\,_{3}F_{2}\left[
\begin{array}
[l]{llll}
2a, & b, & d; & \\
&  &  & 2\\
2b+j, & 1+2a+d-e; &  &
\end{array}
\right]  . \label{2.2}%
\end{equation}
Now, multiply the right-hand side of (1.6) by $x^{d-1}(1-x)^{e-d-1}$ and
integrating with respect to $x$ from $0$ to $1$, we have after some
simplification%
\begin{align*}
R.H.S.  &  =\frac{\Gamma\left(b\right)\Gamma\left(1-b\right)}
{\Gamma\left(  b+\frac{1}{2}j+\frac{1}{2}\left\vert j\right\vert \right)
\Gamma\left(  1-b-\left[  \frac{j+1}{2}\right]  \right)  }\\
&  \cdot{\displaystyle\sum\limits_{n=0}^{\infty}}
\frac{A_{j}}{n!}\frac{\left(  a\right)  _{n}\left(  a+\frac{1}{2}\right)
_{n}\left(  b+\left[  \frac{j+1}{2}\right]  \right)  _{n}}{\left(  b+\frac
{1}{2}j\right)  _{n}\left(  b+\frac{1}{2}j+\frac{1}{2}\right)  _{n}}
{\displaystyle\int\limits_{0}^{1}}
x^{d+2n-1}(1-x)^{e-d-1}dx\\
&  +\frac{2a}{(2b+j)}\frac{\Gamma\left(  -b\right)  \Gamma\left(  1+b\right)
}{\Gamma\left(  b+\frac{1}{2}j+\frac{1}{2}\left\vert j\right\vert \right)
\Gamma\left(  -b-\left[  \frac{j}{2}\right]  \right)  }\\
&  \cdot{\displaystyle\sum\limits_{n=0}^{\infty}}
\frac{B_{j}}{n!}\frac{\left(  a+\frac{1}{2}\right)  _{n}\left(  a+1\right)
_{n}\left(  b+1+\left[  \frac{j}{2}\right]  \right)  _{n}}{\left(  b+\frac
{1}{2}j+\frac{1}{2}\right)  _{n}\left(  b+\frac{1}{2}j+1\right) _{n}}
{\displaystyle\int\limits_{0}^{1}}x^{d+2n}(1-x)^{e-d-1}dx
\end{align*}
Evaluating the be\bigskip ta integrals and using the Legendre's duplication
formula%
\[
\Gamma\left(2z\right)=\frac{2^{2z-1}\Gamma\left(z\right)\Gamma\left(z+\frac{1}{2}\right)  }{\sqrt{\pi}},
\]
we have after some simplification%
\begin{align}
R.H.S.  &  =\frac{\Gamma\left(  d\right)  \Gamma\left(  e-d\right)  }
{\Gamma\left(  e\right)  }\left\{  \frac{\Gamma\left(  b\right)  \Gamma\left(
1-b\right)  }{\Gamma\left(  b+\frac{1}{2}j+\frac{1}{2}\left\vert j\right\vert
\right)  \Gamma\left(  1-b-\left[  \frac{j+1}{2}\right]  \right)  }\right.
\nonumber\\
&  \cdot{\displaystyle\sum\limits_{n=0}^{\infty}}
\frac{A_{j}}{n!}\frac{\left(  a\right)  _{n}\left(  a+\frac{1}{2}\right)
_{n}\left(  b+\left[  \frac{j+1}{2}\right]  \right)  _{n}}{\left(  b+\frac
{1}{2}j\right)  _{n}\left(  b+\frac{1}{2}j+\frac{1}{2}\right)  _{n}}%
\frac{\left(  \frac{1}{2}d\right)  _{n}\left(  \frac{1}{2}d+\frac{1}%
{2}\right)  _{n}}{\left(  \frac{1}{2}e\right)  _{n}\left(  \frac{1}{2}%
e+\frac{1}{2}\right)  _{n}}\nonumber\\
&  +\frac{2a}{(2b+j)} \left(\frac{d}{e}\right)\frac{\Gamma\left(  -b\right)  \Gamma\left(  1+b\right)
}{\Gamma\left(  b+\frac{1}{2}j+\frac{1}{2}\left\vert j\right\vert \right)
\Gamma\left(  -b-\left[  \frac{j}{2}\right]  \right)  }\nonumber\\
&  \left.  \cdot{\displaystyle\sum\limits_{n=0}^{\infty}}
\frac{B_{j}}{n!}\frac{\left(  a+\frac{1}{2}\right)  _{n}\left(  a+1\right)
_{n}\left(  b+1+\left[  \frac{j}{2}\right]  \right)  _{n}}{\left(  b+\frac
{1}{2}j+\frac{1}{2}\right)  _{n}\left(  b+\frac{1}{2}j+1\right)  _{n}}%
\frac{\left(  \frac{1}{2}d+\frac{1}{2}\right)  _{n}\left(  \frac{1}%
{2}d+1\right)  _{n}}{\left(  \frac{1}{2}e+\frac{1}{2}\right)  _{n}\left(
\frac{1}{2}e+1\right)  _{n}}\right\}  \label{2.3}
\end{align}
Finally, equating (\ref{2.3}) and (\ref{2.3}), we get the desired result (\ref{2.1}). This completes the proof of (\ref{2.1}).
\end{proof}

\section{Special Cases}

Here we shall consider some of the very interesting special cases of our
main result (\ref{2.1}). Each of the following formulas hold true provided $a$
or $d$ must be a nonpositive integer.

\begin{corollary}%
\begin{align}
&  \frac{\Gamma\left(  e\right)  \Gamma\left(  e-2a-d\right)  }{\Gamma\left(
e-2a\right)  \Gamma\left(  e-d\right)  }\,_{3}F_{2}\left[
\begin{array}
[l]{llll}%
2a, & b, & d; & \\
&  &  & 2\\
2b, & 1+2a+d-e; &  &
\end{array}
\right] \nonumber\\
&  =\,_{4}F_{3}\left[
\begin{array}
[l]{lllll}%
a, & a+\frac{1}{2}, & \frac{1}{2}d, & \frac{1}{2}d+\frac{1}{2}; & \\
&  &  &  & 1\\
b+\frac{1}{2}, & \frac{1}{2}e, & \frac{1}{2}e+\frac{1}{2}; &  &
\end{array}
\right]  \label{3.1}%
\end{align}

\begin{proof}
Setting $j=0$ in (\ref{2.1}) and simplifying the resulting identity, we are led to the formula (\ref{3.1}).
\end{proof}
\end{corollary}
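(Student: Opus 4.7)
The plan is simply to specialize the main theorem (2.1) at $j=0$ and collect the trivial simplifications. From the coefficient table, $A_{0}=1$ and $B_{0}=0$, so the entire second infinite sum on the right-hand side of (2.1) vanishes identically. This already eliminates half of the work.

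Next I would reduce the index-dependent quantities appearing in the surviving first sum. With $j=0$, we have $\left[\tfrac{j+1}{2}\right] = 0$, $\left[\tfrac{j}{2}\right] = 0$, $\tfrac{1}{2}j = 0$, and $\tfrac{1}{2}|j| = 0$. The Gamma prefactor therefore collapses:
\[
\frac{\Gamma(b)\Gamma(1-b)}{\Gamma\bigl(b+\tfrac{1}{2}j+\tfrac{1}{2}|j|\bigr)\Gamma\bigl(1-b-[\tfrac{j+1}{2}]\bigr)} = \frac{\Gamma(b)\Gamma(1-b)}{\Gamma(b)\Gamma(1-b)} = 1.
\]
Inside the summand, the Pochhammer factor $(b+[\tfrac{j+1}{2}])_{n}/(b+\tfrac{1}{2}j)_{n} = (b)_{n}/(b)_{n}$ cancels, the remaining denominator Pochhammer $(b+\tfrac{1}{2}j+\tfrac{1}{2})_{n}$ becomes $(b+\tfrac{1}{2})_{n}$, and the parameter $2b+j$ on the left-hand side of (2.1) becomes $2b$.

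After these cancellations the right-hand side of (2.1) reads
\[
\sum_{n=0}^{\infty}\frac{(a)_{n}(a+\tfrac{1}{2})_{n}(\tfrac{1}{2}d)_{n}(\tfrac{1}{2}d+\tfrac{1}{2})_{n}}{(b+\tfrac{1}{2})_{n}(\tfrac{1}{2}e)_{n}(\tfrac{1}{2}e+\tfrac{1}{2})_{n}\,n!},
\]
which I would immediately identify as the $_{4}F_{3}$ series on the right of (3.1); the left-hand side of (2.1) with $2b+j \mapsto 2b$ is exactly the left-hand side of (3.1). Equating the two yields the corollary.

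There is no serious obstacle: this is a bookkeeping specialization. The only points that need a moment of care are verifying that the integer-part brackets and the $|j|/2$ term all vanish at $j=0$ so that the prefactor really reduces to $1$, and noting that $B_{0}=0$ removes the second sum entirely (rather than merely simplifying it). Both checks are read directly from the table and from the definitions of $[\cdot]$ and $|\cdot|$.
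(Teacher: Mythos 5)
Your proposal is correct and is exactly the paper's route: the paper's proof of this corollary is the one-line instruction to set $j=0$ in (2.1) and simplify, and your computation ($A_{0}=1$, $B_{0}=0$, the Gamma prefactor and the Pochhammer ratios collapsing to $1$) supplies precisely the bookkeeping the paper omits. The only mismatch --- the argument of the ${}_{3}F_{2}$ being $2$ in (3.1) but printed as $1$ in the statement of (2.1) --- is a typo in the theorem as stated (its own proof, see (2.2), produces argument $2$), not a gap in your argument.
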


\begin{corollary}%
\begin{align}
&  \frac{\Gamma\left(  e\right)  \Gamma\left(  e-2a-d\right)  }{\Gamma\left(
e-2a\right)  \Gamma\left(  e-d\right)  }\,_{3}F_{2}\left[
\begin{array}
[l]{llll}
2a, & b, & d; & \\
&  &  & 2\\
2b+1, & 1+2a+d-e; &  &
\end{array}
\right] \nonumber\\
&  =\,_{4}F_{3}\left[
\begin{array}
[l]{lllll}
a, & a+\frac{1}{2}, & \frac{1}{2}d, & \frac{1}{2}d+\frac{1}{2}; & \\
&  &  &  & 1\\
b+\frac{1}{2}, & \frac{1}{2}e, & \frac{1}{2}e+\frac{1}{2}; &  &
\end{array}
\right] \nonumber\\
&  +\frac{2ad}{e(2b+1)}\,_{4}F_{3}\left[
\begin{array}
[l]{lllll}
a+\frac{1}{2}, & a+1, & \frac{1}{2}d+\frac{1}{2}, & \frac{1}{2}d+1; & \\
&  &  &  & 1\\
b+\frac{3}{2}, & \frac{1}{2}e+\frac{1}{2}, & \frac{1}{2}e+1; &  &
\end{array}
\right]  \label{3.2}%
\end{align}

\begin{proof}
Setting $j=1$ in (\ref{2.1}) and simplifying the resulting identity, we are led to the formula (\ref{3.2}).
\end{proof}
\end{corollary}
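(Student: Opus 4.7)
The plan is to specialize the master identity (\ref{2.1}) to $j=1$ and then perform the elementary simplifications that collapse the two sums into ${}_4F_3$ series. First I would record the arithmetic data for $j=1$: we have $|j|=1$, $[\tfrac{j+1}{2}]=1$, $[\tfrac{j}{2}]=0$, $\tfrac{1}{2}j=\tfrac{1}{2}$, and from the table $A_1=-1$, $B_1=1$. These feed directly into the prefactors and Pochhammer parameters of the two sums on the right of (\ref{2.1}).

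Next I would simplify the gamma-function prefactors. For the first sum the prefactor becomes
\[
\frac{\Gamma(b)\Gamma(1-b)}{\Gamma(b+1)\Gamma(-b)},
\]
which using $\Gamma(b+1)=b\Gamma(b)$ and $\Gamma(1-b)=-b\,\Gamma(-b)$ reduces to $-1$. Combined with $A_1=-1$ this gives an overall factor of $+1$. For the second sum the prefactor is
\[
\frac{\Gamma(-b)\Gamma(1+b)}{\Gamma(b+1)\Gamma(-b)}=1,
\]
so only the factor $\tfrac{2a}{2b+1}\cdot\tfrac{d}{e}$ survives in front, in agreement with $\tfrac{2ad}{e(2b+1)}$ in (\ref{3.2}).

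Then I would identify the Pochhammer symbols that cancel. In the first sum the numerator factor $(b+[\tfrac{j+1}{2}])_n=(b+1)_n$ and the denominator factor $(b+\tfrac{1}{2}j+\tfrac{1}{2})_n=(b+1)_n$ cancel, leaving the Pochhammers $(a)_n,(a+\tfrac{1}{2})_n,(\tfrac{d}{2})_n,(\tfrac{d+1}{2})_n$ in the numerator and $(b+\tfrac{1}{2})_n,(\tfrac{e}{2})_n,(\tfrac{e+1}{2})_n$ in the denominator, which is precisely the ${}_4F_3$ appearing in (\ref{3.2}). Similarly, in the second sum $(b+1+[\tfrac{j}{2}])_n=(b+1)_n$ cancels against $(b+\tfrac{1}{2}j+\tfrac{1}{2})_n=(b+1)_n$, and the remaining parameters assemble into the second ${}_4F_3$.

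The only non-bookkeeping step is verifying those gamma-ratio collapses, and with the standard reflection/recurrence relations it is routine; no obstacle of substance arises. Once the two specializations are carried out and the cancellations performed, the right-hand side of (\ref{2.1}) matches the right-hand side of (\ref{3.2}) term-by-term, while the left-hand side is already in the required form with $j=1$. This completes the proof.
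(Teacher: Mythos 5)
Your proof is correct and takes essentially the same route as the paper's own (one-line) proof, namely setting $j=1$ in (\ref{2.1}) and simplifying. You have merely made explicit the gamma-ratio collapses ($A_1$ times the first prefactor giving $+1$, the second prefactor giving $1$) and the cancellation of the $(b+1)_n$ factors that the paper leaves to the reader, and these computations all check out.
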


\begin{corollary}%
\begin{align}
&  \frac{\Gamma\left(  e\right)  \Gamma\left(  e-2a-d\right)  }{\Gamma\left(e-2a\right)  \Gamma\left(  e-d\right)  }\,_{3}F_{2}\left[
\begin{array}
[l]{llll}%
2a, & b, & d; & \\
&  &  & 2\\
2b-1, & 1+2a+d-e; &  &
\end{array}
\right] \nonumber\\
&  =\,_{4}F_{3}\left[
\begin{array}
[l]{lllll}
a, & a+\frac{1}{2}, & \frac{1}{2}d, & \frac{1}{2}d+\frac{1}{2}; & \\
&  &  &  & 1\\
b-\frac{1}{2}, & \frac{1}{2}e, & \frac{1}{2}e+\frac{1}{2}; &  &
\end{array}
\right] \nonumber\\
&  -\frac{2ad}{e(2b-1)}\,_{4}F_{3}\left[
\begin{array}
[l]{lllll}
a+\frac{1}{2}, & a+1, & \frac{1}{2}d+\frac{1}{2}, & \frac{1}{2}d+1; & \\
&  &  &  & 1\\
b+\frac{1}{2}, & \frac{1}{2}e+\frac{1}{2}, & \frac{1}{2}e+1; &  &
\end{array}
\right]  \label{3.3}%
\end{align}

\begin{proof}
Setting $j=-1$ in (\ref{2.1}) and simplifying the resulting identity, we are led to the formula (\ref{3.3}).
\end{proof}
\end{corollary}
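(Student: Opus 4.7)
The plan is to specialize Theorem 2.1 at $j=-1$ and carefully simplify the various floor-function, modulus, and Gamma-ratio factors so that the right-hand side collapses to a sum of two $_4F_3$ series matching (3.3).

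First I would tabulate the auxiliary constants at $j=-1$: from the table one reads $A_{-1}=1$ and $B_{-1}=1$, and one computes $|j|=1$, $\left[\tfrac{j+1}{2}\right]=0$, $\left[\tfrac{j}{2}\right]=-1$, together with $\tfrac12 j+\tfrac12|j|=0$, $\tfrac12 j=-\tfrac12$, and $\tfrac12 j+\tfrac12=0$. Substituting these into the two Gamma prefactors, the first one reduces to $\Gamma(b)\Gamma(1-b)/[\Gamma(b)\Gamma(1-b)]=1$, while the second collapses to
\[
\frac{2a}{2b-1}\cdot\frac{d}{e}\cdot\frac{\Gamma(-b)\Gamma(1+b)}{\Gamma(b)\Gamma(1-b)}.
\]
Using $\Gamma(-b)=-\Gamma(1-b)/b$ and $\Gamma(1+b)=b\Gamma(b)$, this Gamma quotient equals $-1$, producing the coefficient $-\dfrac{2ad}{e(2b-1)}$ of (3.3).

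Next I would simplify the Pochhammer structure of the two sums. In the first series the denominator Pochhammer $\left(b+\tfrac12 j+\tfrac12\right)_n=(b)_n$ cancels with the numerator factor $\left(b+\left[\tfrac{j+1}{2}\right]\right)_n=(b)_n$, leaving $(b-\tfrac12)_n$ as the only $b$-type denominator; combined with $(a)_n$, $(a+\tfrac12)_n$, $(\tfrac12 d)_n$, $(\tfrac12 d+\tfrac12)_n$, $(\tfrac12 e)_n$, $(\tfrac12 e+\tfrac12)_n$ and $1/n!$, this is precisely the first $_4F_3$ of (3.3). The second series similarly simplifies: the numerator factor $\left(b+1+\left[\tfrac{j}{2}\right]\right)_n=(b)_n$ cancels with the denominator $\left(b+\tfrac12 j+\tfrac12\right)_n=(b)_n$, leaving $(b+\tfrac12)_n$ in the denominator, which together with the remaining shifted-$a$, shifted-$d$, shifted-$e$ Pochhammers yields the second $_4F_3$ of (3.3). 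Combining all terms reproduces (3.3) exactly.

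The only step that requires any care is the second Gamma-quotient reduction, since $\Gamma(-b)$ and $\Gamma(1+b)$ do not simplify as transparently as the corresponding factors in the $j=0$ or $j=+1$ cases; I would therefore carry out that normalization first and double-check the overall sign before collecting terms. The remaining bookkeeping is purely mechanical substitution and Pochhammer cancellation.
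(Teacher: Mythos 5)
Your proposal is correct and follows exactly the route the paper takes: the paper's proof consists of the single sentence ``setting $j=-1$ in (\ref{2.1}) and simplifying,'' and you have simply carried out that simplification in full, with all the constants ($A_{-1}=B_{-1}=1$, $[\tfrac{j+1}{2}]=0$, $[\tfrac{j}{2}]=-1$), the Gamma-quotient reduction to $-1$ via $\Gamma(1+b)=b\Gamma(b)$ and $\Gamma(1-b)=-b\Gamma(-b)$, and the Pochhammer cancellations all checked correctly. Nothing is missing.
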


\begin{corollary}%
\begin{align}
&  \frac{\Gamma\left(  e\right)  \Gamma\left(  e-2a-d\right)  }{\Gamma\left(e-2a\right)  \Gamma\left(  e-d\right)  }\,_{3}F_{2}\left[
\begin{array}
[l]{llll}
2a, & b, & d; & \\
&  &  & 2\\
2b+2, & 1+2a+d-e; &  &
\end{array}
\right] \nonumber\\
&  =\,_{5}F_{4}\left[
\begin{array}
[l]{llllll}
a, & a+\frac{1}{2}, & \frac{1}{2}b+\frac{3}{2}, & \frac{1}{2}d, & \frac{1}{2}d+\frac{1}{2}; & \\
&  &  &  &  & 1\\
\frac{1}{2}b+\frac{1}{2}, & b+\frac{3}{2}, & \frac{1}{2}e, & \frac{1}%
{2}e+\frac{1}{2}; &  &
\end{array}
\right] \nonumber\\
&  +\frac{2ad}{e(b+1)}\,_{4}F_{3}\left[
\begin{array}
[l]{lllll}
a+\frac{1}{2}, & a+1, & \frac{1}{2}d+\frac{1}{2}, & \frac{1}{2}d+1; & \\
&  &  &  & 1\\
b+\frac{3}{2}, & \frac{1}{2}e+\frac{1}{2}, & \frac{1}{2}e+1; &  &
\end{array}
\right]  \label{3.4}%
\end{align}

\begin{proof}
Setting $j=2$ in (\ref{2.1}) and simplifying the resulting identity, we are led to the formula (\ref{3.4}).
\end{proof}
\end{corollary}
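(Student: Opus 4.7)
The plan is to substitute $j=2$ directly into the general identity (\ref{2.1}) and carry out the routine algebraic simplification. First I would evaluate all the piecewise quantities attached to $j$: with $j=2$ we have $\tfrac{1}{2}|j|=1$, $[\tfrac{j+1}{2}]=1$, $[\tfrac{j}{2}]=1$, so
\[
b+\tfrac{1}{2}j+\tfrac{1}{2}|j|=b+2,\qquad b+[\tfrac{j+1}{2}]=b+1,\qquad b+1+[\tfrac{j}{2}]=b+2,
\]
and from the table I would read off $A_{2}=-(b+1+2n)$ and $B_{2}=-2$. Likewise $2b+j=2b+2$, $b+\tfrac{1}{2}j=b+1$, $b+\tfrac{1}{2}j+\tfrac{1}{2}=b+\tfrac{3}{2}$, and $b+\tfrac{1}{2}j+1=b+2$.

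Next I would simplify the two gamma prefactors using $\Gamma(z+1)=z\Gamma(z)$ and $\Gamma(1-b)/\Gamma(-b)=-b$. A short calculation gives
\[
\frac{\Gamma(b)\Gamma(1-b)}{\Gamma(b+2)\Gamma(-b)}=-\frac{1}{b+1},\qquad
\frac{\Gamma(-b)\Gamma(1+b)}{\Gamma(b+2)\Gamma(-b-1)}=-1.
\]
Substituting these, and canceling the matching Pochhammer pair $(b+1)_n/(b+1)_n$ in the first sum and $(b+2)_n/(b+2)_n$ in the second, reduces the two terms of (\ref{2.1}) to
\[
\frac{1}{b+1}\sum_{n\ge0}\frac{(b+1+2n)(a)_n(a+\tfrac12)_n(\tfrac12 d)_n(\tfrac12 d+\tfrac12)_n}{n!\,(b+\tfrac32)_n(\tfrac12 e)_n(\tfrac12 e+\tfrac12)_n}
\]
and
\[
\frac{2ad}{e(2b+2)}\cdot 2\sum_{n\ge0}\frac{(a+\tfrac12)_n(a+1)_n(\tfrac12 d+\tfrac12)_n(\tfrac12 d+1)_n}{n!\,(b+\tfrac32)_n(\tfrac12 e+\tfrac12)_n(\tfrac12 e+1)_n}
\]
respectively (the factor $2$ comes from $-B_2=2$; the factor $-1$ from the prefactor cancels the sign).

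The second sum is already a ${}_{4}F_{3}$ of the required form, and the outer constant simplifies to $2ad/\bigl(e(b+1)\bigr)$, matching the second term in (\ref{3.4}). For the first sum, the main trick is to recognize
\[
\frac{b+1+2n}{b+1}=\frac{(\tfrac{b}{2}+\tfrac{3}{2})_n}{(\tfrac{b}{2}+\tfrac{1}{2})_n},
\]
which follows from $(\alpha+1)_n/(\alpha)_n=(\alpha+n)/\alpha$ applied with $\alpha=\tfrac{b}{2}+\tfrac{1}{2}$ and noting that $2(\alpha+n)=b+1+2n$. Absorbing this ratio into the summand promotes it to a ${}_{5}F_{4}$ with the two new parameters $\tfrac12 b+\tfrac32$ (numerator) and $\tfrac12 b+\tfrac12$ (denominator), which is precisely the first series on the right-hand side of (\ref{3.4}).

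I expect no serious obstacle; the one step requiring care is the Pochhammer manipulation of $b+1+2n$ — overlooking the identification with $(\tfrac{b}{2}+\tfrac{3}{2})_n/(\tfrac{b}{2}+\tfrac{1}{2})_n$ would leave the first term as a difficult-to-recognize non-hypergeometric sum rather than a clean ${}_{5}F_{4}$. Once that step is in place, everything else is bookkeeping with gamma-function ratios.
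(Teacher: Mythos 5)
Your proposal is correct and follows exactly the route the paper intends: the paper's proof of this corollary is simply ``set $j=2$ in (\ref{2.1}) and simplify,'' and your computation supplies precisely the omitted details — the evaluation of the gamma-quotients to $-1/(b+1)$ and $-1$, the cancellation of the $(b+1)_n$ and $(b+2)_n$ pairs, and the key identification $(b+1+2n)/(b+1)=(\tfrac{1}{2}b+\tfrac{3}{2})_n/(\tfrac{1}{2}b+\tfrac{1}{2})_n$ that produces the extra numerator/denominator pair of the ${}_{5}F_{4}$. All steps check out against the table entries $A_{2}=-(b+1+2n)$, $B_{2}=-2$ and the stated right-hand side of (\ref{3.4}).
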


\begin{corollary}%
\begin{align}
&  \frac{\Gamma\left(e\right)\Gamma\left(e-2a-d\right)}{\Gamma\left(e-2a\right)\Gamma\left(  e-d\right)  }\,_{3}F_{2}\left[
\begin{array}
[l]{llll}
2a, & b, & d; & \\
&  &  & 2\\
2b-2, & 1+2a+d-e; &  &
\end{array}
\right] \nonumber\\
&  =\,_{5}F_{4}\left[
\begin{array}
[l]{llllll}
a, & a+\frac{1}{2}, & \frac{1}{2}b+\frac{1}{2}, & \frac{1}{2}d, & \frac{1}{2}d+\frac{1}{2}; & \\
&  &  &  &  & 1\\
\frac{1}{2}b-\frac{1}{2}, & b-\frac{1}{2}, & \frac{1}{2}e, & \frac{1}%
{2}e+\frac{1}{2}; &  &
\end{array}
\right] \nonumber\\
&  -\frac{2ad}{e(b-1)}\,_{4}F_{3}\left[
\begin{array}
[l]{lllll}
a+\frac{1}{2}, & a+1, & \frac{1}{2}d+\frac{1}{2}, & \frac{1}{2}d+1; & \\
&  &  &  & 1\\
b-\frac{1}{2}, & \frac{1}{2}e+\frac{1}{2}, & \frac{1}{2}e+1; &  &
\end{array}
\right]  \label{3.5}%
\end{align}

\begin{proof}
Setting $j=-2$ in (\ref{2.1}) and simplifying the resulting identity, we are led to the formula (\ref{3.5}).
\end{proof}
\end{corollary}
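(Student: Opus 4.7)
The plan is to specialize the master identity~(\ref{2.1}) at $j=-2$ and simplify the resulting expressions term by term. From the table we read $A_{-2}=1-b-2n$ and $B_{-2}=2$, while the floor and absolute-value expressions appearing in the Gamma and Pochhammer arguments evaluate to $b+\frac{1}{2}j+\frac{1}{2}|j|=b$, $[\frac{j+1}{2}]=-1$, and $[\frac{j}{2}]=-1$. After these substitutions, the left-hand $_3F_2$ coincides exactly with the one appearing in~(\ref{3.5}), and the task reduces to recognizing the right-hand side as the sum of a $_5F_4$ and a $_4F_3$ with the coefficients claimed there.

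For the first series, the numerator Pochhammer $(b+[\frac{j+1}{2}])_n=(b-1)_n$ cancels the denominator Pochhammer $(b+\frac{1}{2}j)_n=(b-1)_n$, leaving only $(b-\frac{1}{2})_n$ downstairs from the $b$-block. The Gamma ratio collapses via $\Gamma(b)\Gamma(1-b)/[\Gamma(b)\Gamma(2-b)]=1/(1-b)$. The one genuinely non-routine step is to absorb the linear factor $(1-b-2n)/(1-b)=(b-1+2n)/(b-1)$ into a ratio of Pochhammer symbols; the identity $(\frac{1}{2}b+\frac{1}{2})_n/(\frac{1}{2}b-\frac{1}{2})_n=(b-1+2n)/(b-1)$ does exactly this, introducing a new numerator parameter $\frac{1}{2}b+\frac{1}{2}$ and a new denominator parameter $\frac{1}{2}b-\frac{1}{2}$. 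Assembling this with the surviving $(b-\frac{1}{2})_n$ and the unchanged $d$- and $e$-blocks inherited from~(\ref{2.1}) yields precisely the $_5F_4$ shown in~(\ref{3.5}).

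For the second series, $2a/(2b+j)=a/(b-1)$, and applying $\Gamma(-b)=-\Gamma(1-b)/b$ together with $\Gamma(1+b)=b\Gamma(b)$ reduces $\Gamma(-b)\Gamma(1+b)/[\Gamma(b)\Gamma(1-b)]$ to $-1$, so the scalar prefactor becomes $-ad/(e(b-1))$; combined with $B_{-2}=2$ this produces the coefficient $-2ad/(e(b-1))$ stated in~(\ref{3.5}). The Pochhammer symbols $(b+1+[\frac{j}{2}])_n=(b)_n$ and $(b+\frac{1}{2}j+1)_n=(b)_n$ cancel against one another, leaving $(b-\frac{1}{2})_n$ as the lone $b$-type denominator; together with the $a$- and $d/e$-blocks this is exactly the $_4F_3$ in~(\ref{3.5}).

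The main obstacle, modest as it is, is recognizing the linear-in-$n$ factor $1-b-2n$ as the rising-factorial quotient $(1-b)(\frac{1}{2}b+\frac{1}{2})_n/(\frac{1}{2}b-\frac{1}{2})_n$; it is this single rewriting that promotes the right-hand side from a $_4F_3$ (as in the $j=0,\pm 1$ cases) to a $_5F_4$, and the analogous identification is what will be needed in the companion $j=+2$ corollary. Everything else is Gamma-function bookkeeping and cancellation of matching Pochhammer symbols, with no separate termination or convergence argument required beyond the hypothesis that $a$ or $d$ is a nonpositive integer, which is already imposed in Theorem~\ref{2.1}.
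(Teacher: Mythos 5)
Your proposal is correct and follows essentially the same route as the paper, whose proof of (\ref{3.5}) consists precisely of setting $j=-2$ in (\ref{2.1}) and simplifying; you have simply carried out the bookkeeping explicitly, including the one non-trivial step of absorbing the factor $1-b-2n$ into the Pochhammer quotient $(\tfrac{1}{2}b+\tfrac{1}{2})_{n}/(\tfrac{1}{2}b-\tfrac{1}{2})_{n}$, which is exactly what promotes the first series to a $_{5}F_{4}$. All the individual evaluations (the Gamma ratios collapsing to $1/(1-b)$ and $-1$, the cancellation of $(b-1)_{n}$ and $(b)_{n}$, and the prefactor $-2ad/(e(b-1))$) check out, so nothing further is needed.
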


\begin{corollary}%
\begin{align}
&  \frac{\Gamma\left(  e\right)  \Gamma\left(  e-2a-d\right)  }{\Gamma\left(
e-2a\right)  \Gamma\left(  e-d\right)  }\,_{3}F_{2}\left[
\begin{array}
[l]{llll}
2a, & b, & d; & \\
&  &  & 2\\
2b+3, & 1+2a+d-e; &  &
\end{array}
\right] \nonumber\\
&  =\,_{5}F_{4}\left[
\begin{array}
[l]{llllll}
a, & a+\frac{1}{2}, & \frac{1}{4}b+\frac{3}{2}, & \frac{1}{2}d, & \frac{1}%
{2}d+\frac{1}{2}; & \\
&  &  &  &  & 1\\
\frac{1}{4}b+\frac{1}{2}, & b+\frac{3}{2}, & \frac{1}{2}e, & \frac{1}%
{2}e+\frac{1}{2}; &  &
\end{array}
\right] \nonumber\\
&  +\frac{6ad}{e(2b+3)}\,_{5}F_{4}\left[
\begin{array}
[l]{llllll}
a+\frac{1}{2}, & a+1, & \frac{3}{4}b+\frac{5}{2}, & \frac{1}{2}d+\frac{1}{2}, & \frac{1}{2}d+1; & \\
&  &  &  &  & 1\\
\frac{3}{4}b+\frac{3}{2}, & b+\frac{5}{2}, & \frac{1}{2}e+\frac{1}{2}, &
\frac{1}{2}e+1; &  &
\end{array}
\right]  \label{3.6}%
\end{align}

\begin{proof}
Setting $j=3$ in (\ref{2.1}) and simplifying the resulting identity, we are
led to the formula (\ref{3.6}).
\end{proof}
\end{corollary}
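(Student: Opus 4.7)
The plan is to specialize (\ref{2.1}) at $j=3$ and simplify. From the table I read off $A_{3}=b+2+4n$ and $B_{3}=-(3b+6+4n)$. With $j=3$ the floor functions and modulus evaluate to $[\tfrac{j+1}{2}]=2$, $[\tfrac{j}{2}]=1$, and $\tfrac{1}{2}j+\tfrac{1}{2}\lvert j\rvert=3$, so the various $b$-shifts become explicit: in the first sum the numerator factor $(b+[\tfrac{j+1}{2}])_{n}=(b+2)_{n}$ cancels the denominator factor $(b+\tfrac{1}{2}j+\tfrac{1}{2})_{n}=(b+2)_{n}$; in the second sum the numerator factor $(b+1+[\tfrac{j}{2}])_{n}=(b+2)_{n}$ cancels $(b+\tfrac{1}{2}j+\tfrac{1}{2})_{n}=(b+2)_{n}$ in the denominator. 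This reduces each summand to a generic ${}_{5}F_{4}$-type term once the remaining $n$-dependent linear factor is dealt with.

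Next I would evaluate the two Gamma-function prefactors. The $A_{j}$ block requires $\Gamma(b)\Gamma(1-b)/[\Gamma(b+3)\Gamma(-1-b)]$; using $\Gamma(-1-b)=\Gamma(1-b)/[b(b+1)]$ and $\Gamma(b+3)=b(b+1)(b+2)\Gamma(b)$ this collapses to $1/(b+2)$. The $B_{j}$ block needs $\Gamma(-b)\Gamma(1+b)/[\Gamma(b+3)\Gamma(-b-1)]$, which via $\Gamma(-b)/\Gamma(-b-1)=-(b+1)$ and $\Gamma(1+b)/\Gamma(b+3)=1/[(b+1)(b+2)]$ simplifies to $-1/(b+2)$. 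The minus sign here will combine with the overall minus sign already present in $B_{3}$.

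The central step, and really the only one that needs care, is to absorb the linear $n$-dependent factors $A_{3}$ and $B_{3}$ into Pochhammer symbols so that each sum becomes a genuine ${}_{5}F_{4}$. The key identity is $c+kn=c\,(c/k+1)_{n}/(c/k)_{n}$. Applied with $k=4$ and $c=b+2$, it rewrites $b+2+4n$ as $(b+2)(\tfrac{b}{4}+\tfrac{3}{2})_{n}/(\tfrac{b}{4}+\tfrac{1}{2})_{n}$; the factor $(b+2)$ cancels the $1/(b+2)$ from the Gamma prefactor, and the new Pochhammer pair supplies exactly the extra numerator parameter $\tfrac{1}{4}b+\tfrac{3}{2}$ and extra denominator parameter $\tfrac{1}{4}b+\tfrac{1}{2}$ that appear in the first ${}_{5}F_{4}$ of (\ref{3.6}). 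Applied to $3b+6+4n=3(b+2)+4n$ with $c=3(b+2)$, the same identity gives $3(b+2)(\tfrac{3b}{4}+\tfrac{5}{2})_{n}/(\tfrac{3b}{4}+\tfrac{3}{2})_{n}$, producing the parameters $\tfrac{3}{4}b+\tfrac{5}{2}$ and $\tfrac{3}{4}b+\tfrac{3}{2}$ of the second ${}_{5}F_{4}$.

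To close, I would collect the overall constants in front of the second sum: $\tfrac{2a}{2b+3}\cdot\tfrac{d}{e}\cdot\bigl(-\tfrac{1}{b+2}\bigr)\cdot(-1)\cdot 3(b+2)=\tfrac{6ad}{e(2b+3)}$, which is precisely the coefficient stated in (\ref{3.6}), while the first sum carries the trivial coefficient $+1$. The only delicate bookkeeping is tracking the two sign flips (one from the explicit $-1$ in $B_{3}$ and one from the $B$-Gamma prefactor), but there is no genuine obstacle; the identity follows by reading off the parameter lists.
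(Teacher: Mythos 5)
Your proposal is correct and follows exactly the route the paper intends: it specializes (\ref{2.1}) at $j=3$ and carries out the simplification that the paper leaves implicit, correctly reducing the Gamma prefactors to $\pm 1/(b+2)$, cancelling the $(b+2)_n$ Pochhammer pairs, and absorbing $A_3$ and $B_3$ into the extra numerator/denominator parameters via $c+kn=c\,(c/k+1)_n/(c/k)_n$, with the signs and the final coefficient $6ad/[e(2b+3)]$ all checking out. The only cosmetic point worth noting is that the argument of the ${}_3F_2$ should be $2$ (as in (\ref{2.2}) and (\ref{3.6})), the ``$1$'' printed in (\ref{2.1}) being a typo of the paper, not an issue with your argument.
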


\begin{corollary}%
\begin{align}
&  \frac{\Gamma\left(  e\right)  \Gamma\left(  e-2a-d\right)  }{\Gamma\left(
e-2a\right)  \Gamma\left(  e-d\right)  }\,_{3}F_{2}\left[
\begin{array}
[l]{llll}
2a, & b, & d; & \\
&  &  & 2\\
2b-3, & 1+2a+d-e; &  &
\end{array}
\right] \nonumber\\
&  =\,_{5}F_{4}\left[
\begin{array}
[l]{llllll}
a, & a+\frac{1}{2}, & \frac{1}{4}b+\frac{3}{4}, & \frac{1}{2}d, & \frac{1}{2}d+\frac{1}{2}; & \\
&  &  &  &  & 1\\
\frac{1}{4}b-\frac{1}{4}, & b-\frac{3}{2}, & \frac{1}{2}e, & \frac{1}{2}e+\frac{1}{2}; &  &
\end{array}
\right] \nonumber\\
&  -\frac{6ad}{e(2b-3)}\,_{5}F_{4}\left[
\begin{array}
[l]{llllll}
a+1, & a+\frac{1}{2}, & \frac{3}{4}b+\frac{1}{4}, & \frac{1}{2}d+\frac{1}{2}, & \frac{1}{2}d+1; & \\
&  &  &  &  & 1\\
\frac{3}{4}b-\frac{3}{4}, & b-\frac{1}{2}, & \frac{1}{2}e+\frac{1}{2}, &
\frac{1}{2}e+1; &  &
\end{array}
\right]  \label{3.7}%
\end{align}

\begin{proof}
Setting $j=-3$ in (\ref{2.1}) and simplifying the resulting identity, we are led to the formula (\ref{3.7}).
\end{proof}
\end{corollary}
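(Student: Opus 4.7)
The plan is to specialize the main theorem (\ref{2.1}) to $j=-3$ and then carry out the bookkeeping that the one-line ``simplifying'' in the author's proof subsumes. First I would evaluate the auxiliary integer quantities appearing in (\ref{2.1}): for $j=-3$ one has $\lfloor(j+1)/2\rfloor=-1$, $\lfloor j/2\rfloor=-2$, and $\tfrac{1}{2}|j|=\tfrac{3}{2}$. Consequently $b+\tfrac{1}{2}j+\tfrac{1}{2}|j|=b$, $1-b-\lfloor(j+1)/2\rfloor=2-b$, and $-b-\lfloor j/2\rfloor=2-b$; the shifted $b$-parameters collapse to $b+\lfloor(j+1)/2\rfloor=b-1$, $b+\tfrac{1}{2}j=b-\tfrac{3}{2}$, $b+\tfrac{1}{2}j+\tfrac{1}{2}=b-1$, $b+1+\lfloor j/2\rfloor=b-1$, and $b+\tfrac{1}{2}j+1=b-\tfrac{1}{2}$. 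From the table I read off $A_{-3}=1-b-4n$ and $B_{-3}=3-3b-4n$.

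Next I would simplify the two Gamma prefactors. Using $\Gamma(2-b)=(1-b)\Gamma(1-b)$, the prefactor of the first sum collapses to $\Gamma(b)\Gamma(1-b)/[\Gamma(b)\Gamma(2-b)]=1/(1-b)$. For the second sum, the reflection identity $\Gamma(-b)\Gamma(1+b)=-\Gamma(b)\Gamma(1-b)$ yields $\Gamma(-b)\Gamma(1+b)/[\Gamma(b)\Gamma(2-b)]=1/(b-1)$.

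I would then absorb the linear factors $A_{-3}$ and $B_{-3}$ into the summands via the elementary identity $(x+n)=x(x+1)_n/(x)_n$. Writing $1-b-4n=-4(n+\tfrac{b-1}{4})$ yields
\[
\frac{1-b-4n}{1-b}=\frac{(\tfrac{1}{4}b+\tfrac{3}{4})_n}{(\tfrac{1}{4}b-\tfrac{1}{4})_n},
\]
which is precisely the extra numerator/denominator pair appearing in the first $_5F_4$ of (\ref{3.7}). Similarly $3-3b-4n=-4(n+\tfrac{3(b-1)}{4})$ gives
\[
\frac{3-3b-4n}{b-1}=-3\,\frac{(\tfrac{3}{4}b+\tfrac{1}{4})_n}{(\tfrac{3}{4}b-\tfrac{3}{4})_n},
\]
producing the extra parameters of the second $_5F_4$ and, together with $[2a/(2b+j)]\cdot(d/e)=2ad/[e(2b-3)]$ and the Gamma factor $1/(b-1)$, the overall coefficient $-6ad/[e(2b-3)]$.

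Finally I would cancel the common $(b-1)_n$ shared between an upper and a lower parameter in each sum: in the first sum $(b-1)_n$ coming from $b+\lfloor(j+1)/2\rfloor$ cancels $(b-1)_n$ coming from $b+\tfrac{1}{2}j+\tfrac{1}{2}$, leaving $(b-\tfrac{3}{2})_n$ in the lower list; in the second sum $(b-1)_n$ coming from $b+1+\lfloor j/2\rfloor$ cancels $(b-1)_n$ coming from $b+\tfrac{1}{2}j+\tfrac{1}{2}$, leaving $(b-\tfrac{1}{2})_n$. What remains is exactly the right-hand side of (\ref{3.7}). The main obstacle is purely clerical: sign-tracking (several Gamma arguments involve $2-b$ or $b-1$, with the two resulting prefactors differing by a sign) and confirming that the single linear factor in $n$ produces exactly one new upper/lower Pochhammer pair, consistent with the bump in rank from $_4F_3$ (for $|j|\le 1$) to $_5F_4$ here, and in the other $|j|\ge 2$ corollaries.
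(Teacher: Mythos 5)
Your proposal is correct and follows exactly the route the paper intends: the paper's proof of (\ref{3.7}) is literally ``set $j=-3$ in (\ref{2.1}) and simplify,'' and your computation supplies the omitted bookkeeping accurately (the values $[\frac{j+1}{2}]=-1$, $[\frac{j}{2}]=-2$, the Gamma-quotients $1/(1-b)$ and $1/(b-1)$, the absorption of $A_{-3}=1-b-4n$ and $B_{-3}=3-3b-4n$ into the Pochhammer pairs $(\tfrac14 b+\tfrac34)_n/(\tfrac14 b-\tfrac14)_n$ and $-3(\tfrac34 b+\tfrac14)_n/(\tfrac34 b-\tfrac34)_n$, and the cancellation of $(b-1)_n$ all check out and reproduce (\ref{3.7}) exactly). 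The only cosmetic point is the argument of the $_3F_2$, which is $2$ in (\ref{3.7}) but printed as $1$ in (\ref{2.1}); this is a typo in the statement of the theorem (compare (\ref{2.2})), not a defect of your argument.
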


\bigskip The results (\ref{3.1}) is the well known result of Krattenthaler and
Rao \cite{5} and the results (\ref{3.2}) to (\ref{3.7}) are closely related to it.

\begin{remark}
We conclude this section by mentioning that the series $_{5}F_{4}$ appearing
on the right-hand side of the results (\ref{3.4}) to (\ref{3.7}) can also be
written as a sum of two $_{4}F_{3}$ and then we can obtain alternate forms of
the results.
\end{remark}

\subsection*{Conflict of Interests}
The authors declare that they have no any conflict of interests.

\subsection*{Acknowledgments}
\begin{enumerate}
\item The work of this research paper was supported by the research grant (05/4/33) funded by Jazan University - Jazan, Saudi Arabia.
\item All authors contributed equally in this paper. They read and approved the final manuscript.
\end{enumerate}

\end{document}